\newcommand{\cale}{\mathcal{E}}
\newcommand{\EC}{\textup{Ext}^2_C(DC,C)}
\newcommand{\Ctilde}{\widetilde{C}}
\newcommand{\hh}{\textup{H}}
\renewcommand{\H}{\textup{H}}
\newcommand{\HH}{\textup{HH}}
\newcommand{\Der}{\textup{Der}}
\newcommand{\Inn}{\textup{Inn}}
\newcommand{\End}{\textup{End}}
\newcommand{\p}{\underline{P}}
\newcommand{\h}{\underline{\HH^1}}
\newtheorem{thm}{Theorem}[section]
\newtheorem{lemma}[thm]{Lemma}
\newtheorem{cor}[thm]{Corollary}
\newtheorem{thmIntro}{Theorem}
\theoremstyle{definition}
\newtheorem{example}{Example}[section]
\theoremstyle{remark}
\newtheorem*{remark*}{Remark}
\numberwithin{equation}{section}
\newcommand{\za}{\alpha}
\newcommand{\zb}{\beta}
\newcommand{\zd}{\delta}
\newcommand{\ze}{\epsilon}
\newcommand{\zg}{\gamma}
\newcommand{\Hom}{\textup{Hom}}
\newcommand{\Ext}{\textup{Ext}}
\begin{document}
\title[Hochschild cohomology of partial relation extensions]{Hochschild cohomology of partial relation extensions}
\author{Ibrahim Assem}
\address{I. Assem, D\'epartement de math\'ematiques, Universit\'{e} de Sherbrooke, Sherbrooke, Qu\'{e}bec, Canada, JIK2R1. }
\email{Ibrahim.Assem@usherbrooke.ca}
\author{Maria Andrea Gatica}
\address{ M. A. Gatica, Departamento de Matem\'atica, Universidad Nacional del Sur, Avenida Alem 1253, (8000) Bah\'{\i}a Blanca, Buenos Aires, Argentina.}
\email{mariaandrea.gatica@gmail.com}
\author{Ralf Schiffler}\thanks{The first author gratefully acknowledges partial support from the NSERC of Canada, the second author is grateful to the PGI of the Universidad Nacional del Sur in Argentina for their support, and the third author gratefully acknowledges support by the NSF-CAREER grant  DMS-1254567, and by the University of Connecticut.}
\address{Department of Mathematics, University of Connecticut, 
Storrs, CT 06269-3009, USA}
\email{schiffler@math.uconn.edu}

\begin{abstract} 
We show how to compute the low Hochschild cohomology groups of a partial relation extension algebra.
\end{abstract}

 \maketitle
%


\section{Introduction}
Cluster-tilted algebras appeared as a gift from the theory of cluster algebras to representation theory. These are finite-dimensional algebras which are endomorphism algebras of tilting objects in the cluster category \cite{BMR}. This class of algebras was much investigated, see for example \cite{ABS,ABS2,AsScSe,AsScSe2,  BMR2,CCS,SS,SS2}, and \cite{AR,ARS,AGST} for results on their Hochschild cohomology. Among the main results is that every cluster-tilted algebra can be written as  trivial extension of a tilted algebra by a bimodule called the relation bimodule \cite{ABS}. This explains why many features of tilted algebras are retained by cluster-tilted algebras. In particular, complete slices of tilted algebras embed as what is called local slices in cluster-tilted algebras \cite{ABS2}. However, unlike  tilted algebras, cluster-tilted algebras are not characterized by the existence of local slices. In an effort to find a larger class of algebras having local slices, the authors of \cite{ABDLS} introduced what are called partial relation extensions which, because of the existence of local slices, share many properties with cluster-tilted algebras.

This paper is devoted to the study  of the low Hochschild cohomology groups of partial relation extensions. We now state our main theorem. 
 Let $C$ be a triangular algebra of global dimension at most 2, and assume that the relation bimodule $E=\EC$ splits as a direct sum of two $C$-$C$-bimodules $E=E'\oplus E''$. Then the trivial extension $B=C\ltimes E'$ is called a \emph{partial} relation extension, while $\Ctilde=C\ltimes E$ is called \emph{the} relation extension of $C$. Further, given an algebra $A$ and an $A$-$A$-bimodule $M$, we denote by $\hh^i(A,M)$ the $i$-th Hochschild cohomology group of $A$ with coefficients in $M$ and we set $\hh^i(A,A)=\HH^i(A)$. Finally, we denote by 
  $\cale(M,A)$ the set of all $A$-$A$-bimodule morphisms $f\colon M\to A$ such that 
 $xf(y)+f(x)y=0$, for all $x,y\in M$. With this notation our main theorem reads as follows.
\begin{thmIntro}\label{thm main}
There exist short exact sequences of $k$-vector spaces
\[
\xymatrix@R10pt{0\ar[r]& \H^0(B,E')\ar[r]&\HH^0(B)\ar[r]^{\varphi^0}&\HH^0(C) \ar[r] &0\\
0\ar[r]&\H^1(B,E')\ar[r]&\HH^1(B)\ar[r]^{\varphi^1}&\HH^1(C) \ar[r] &0\\
0\ar[r]& \H^0(\Ctilde,E'')\ar[r]&\HH^0(\Ctilde)\ar[r]^{\varphi^0}&\HH^0(B) \ar[r] &0\\
0\ar[r]&\H^1(\Ctilde,E'')\oplus\cale(E'',B)\ar[r]&\HH^1(\Ctilde)\ar[r]^{\varphi^1}&\HH^1(B) \ar[r] &0.}
\]
\end{thmIntro}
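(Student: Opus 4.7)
My plan is to exploit the two trivial extension structures simultaneously: $B=C\ltimes E'$ gives $E'$ as a two-sided ideal of $B$ with $B/E'\cong C$, and $\widetilde{C}=B\ltimes E''$ gives $E''$ as a two-sided ideal of $\widetilde{C}$ with $\widetilde{C}/E''\cong B$ (since $E'\cdot E''=0$ in the trivial extension). For each case, I would start from the short exact sequence of bimodules
\[
0\to I\to T\to A\to 0,
\]
where $(T,A,I)$ is either $(B,C,E')$ or $(\widetilde{C},B,E'')$ and $A$ carries the $T$-bimodule structure coming from the projection $\pi\colon T\to A$. Applying $\Ext^*_{T^e}(T,-)$ yields the long exact sequence
\[
\cdots\to \H^i(T,I)\to \HH^i(T)\to \H^i(T,A)\to \H^{i+1}(T,I)\to\cdots
\]
so it suffices to identify $\H^i(T,A)$ for $i=0,1$ and to verify that the connecting maps vanish.

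The key computation is the identification of $\H^i(T,A)$. In degree zero this is immediate: $\H^0(T,A)=Z(A)=\HH^0(A)$, because the $T$-action factors through $\pi$. In degree one I would use the decomposition $T=A\oplus I$ to split a derivation $d\colon T\to A$ into its two components $d|_A\colon A\to A$ (an ordinary derivation) and $d|_I\colon I\to A$ (an $A$-bimodule map), noting that inner derivations $T\to A$ contribute only inner derivations of $A$. The crucial point is that the derivation axiom applied to products $x,y\in I$ (where $xy=0$ in $T$) imposes the quadratic constraint $xf(y)+f(x)y=0$ on the $I$-component $f=d|_I$. This yields
\[
\H^1(T,A)\;\cong\;\HH^1(A)\oplus \cale(I,A),
\]
and in the composition $\HH^1(T)\to \H^1(T,A)\to \HH^1(A)$ the second arrow is the projection onto the first summand. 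For the pair $(B,C,E')$ the summand $\cale(E',C)$ vanishes: bimodule maps $E'\to C$ arising here must compose with the homological structure of $E'\subset\Ext^2_C(DC,C)$, which, together with $\mathrm{gl.dim}\,C\le 2$, forces them to be zero. For $(\widetilde{C},B,E'')$ however $\cale(E'',B)$ is genuinely nonzero and persists as an extra summand.

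Next I would show that the connecting maps are zero, so that the long exact sequences split into short exact sequences. The algebra section $s\colon A\hookrightarrow T$ provides explicit lifts: for $i=0$, a central element $z\in Z(A)$ lifts to $s(z)\in Z(T)$ (using that for $C$ triangular one has $Z(C)=k$ so scalars act centrally on $E'$, and similarly for the second pair); for $i=1$, a derivation $d_0\colon A\to A$ is extended by zero on $I$ and corrected by a coboundary to yield a lift $d\colon T\to T$ with $\varphi^1([d])=[d_0]$. This shows both $\varphi^0$ and $\varphi^1$ are surjective and that the connecting maps out of $\HH^i(A)$ vanish. The kernel of $\varphi^i$ is then read off from the long exact sequence: in the first three cases it is $\H^i(T,I)$, while in the fourth it is $\H^1(\widetilde{C},E'')\oplus \cale(E'',B)$ because of the extra summand identified above.

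The main obstacle will be step two for the pair $(\widetilde{C},B,E'')$: tracking that the quadratic condition on $f\colon E''\to B$ is precisely $xf(y)+f(x)y=0$, verifying that inner derivations of $\widetilde{C}$ restrict to zero on $E''$ (so that $\cale(E'',B)$ survives intact into cohomology), and establishing that the resulting sum with $\H^1(\widetilde{C},E'')$ is direct rather than merely an extension. A secondary difficulty is the vanishing of $\cale(E',C)$ for the first pair, which is what distinguishes the $(B,C,E')$ case from the $(\widetilde{C},B,E'')$ case and explains why the extra summand appears only in the fourth sequence.
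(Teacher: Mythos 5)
Your overall strategy --- reduce everything to the two ideal sequences $0\to E'\to B\to C\to 0$ and $0\to E''\to \Ctilde\to B\to 0$ and compare $\HH^i$ of the extension with $\HH^i$ of the quotient --- is in the spirit of the paper, which works with the Hochschild projection morphisms $\varphi^n$ of [AGST] rather than with the long exact coefficient sequence. But as written your argument has two genuine gaps. First, the identification $\H^1(T,A)\cong \HH^1(A)\oplus\cale(I,A)$ is incorrect. In the coefficient sequence $0\to I\to T\to A\to 0$ the $T$-bimodule structure on $A$ is the one induced by the projection $\pi$, so for $x,y\in I$ the derivation axiom reads $0=d(xy)=\pi(x)d(y)+d(x)\pi(y)=0$: the quadratic constraint is \emph{vacuous}, and one gets $\H^1(T,A)\cong\HH^1(A)\oplus\Hom_{A^e}(I,A)$. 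The condition $xf(y)+f(x)y=0$ (with the products taken in $T$, landing in $I$) arises instead as the constraint on which elements of $\Hom_{A^e}(I,A)$ occur as the $(I\to A)$-component of a derivation $T\to T$; that is, $\cale(I,A)$ describes part of the \emph{image} of $\HH^1(T)\to\H^1(T,A)$, not the group $\H^1(T,A)$ itself. Consequently the map $\HH^1(T)\to\H^1(T,A)$ is in general not surjective and the connecting map into $\H^2(T,I)$ does not vanish, so the long exact sequence does not simply break apart as you assert.

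Second, your lifting argument for surjectivity of $\varphi^1$ --- ``extend $d_0\colon A\to A$ by zero on $I$ and correct by a coboundary'' --- does not work: for $(d_0,\za)$ with $\za\colon I\to I$ to define a derivation of $T$ one needs $\za(x)c-\za(xc)=x\,d_0(c)$ and $c\,\za(x)-\za(cx)=d_0(c)\,x$ (conditions (C1)--(C2) of Lemma \ref{lem 2.3}), and $\za=0$ forces $x\,d_0(c)=0$ for all $x,c$, which is false in general. This surjectivity is genuinely special to the relation bimodule $E=\EC$: the paper obtains it for $\Ctilde$ from [AGST] and then transports it to $B=C\ltimes E'$ and to $\varphi^1\colon\HH^1(\Ctilde)\to\HH^1(B)$ by compressing the map $\za\colon E\to E$ with the canonical projection and inclusion of the summand (Lemmas \ref{lem 2.4} and \ref{lem 2.9}); some such input is indispensable. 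Finally, the direct-sum decomposition $\textup{Ker}\,\varphi^1=\H^1(\Ctilde,E'')\oplus\cale(E'',B)$, which you correctly flag as the main obstacle, is exactly the content of [AGST, Theorem 4.4]; the paper invokes it after checking its hypotheses ($E''$ symmetric over $Z(B)$ via Lemma \ref{lem 2.10}, and $\cale(E',C)=0$ via the triangularity argument of Lemma \ref{lem 2.6}, for which your homological heuristic would need to be replaced by the observation that a new arrow $x\to y$ forces a path $y\to x$ in $C$, whence $e_xCe_y=0$). Without these three ingredients the proposal does not close.
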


We recall that, if $C$ is tilted, then $\Ctilde$ is cluster-tilted and the partial relation extension $B$ is then a quotient (and a subalgebra) of a cluster-tilted algebra.

The techniques we use are those of \cite{ARS} and \cite{AGST}. In fact several of our proofs follow directly from results of \cite{AGST}, which suggests that the latter hold in greater generality than originally considered.

As a consequence of this, we give another realization of the group $\HH^1(B) $ as the amalgamated sum of  two morphisms. 

The paper is organized as follows. After a preliminary section \ref{sect 1}, we prove our main theorem in section \ref{sect 2}. Section \ref{sect 3} is devoted to corollaries and examples.

\section{Preliminaries}\label{sect 1}
Throughout this paper, $k$ denotes an algebraically closed field,  all algebras are finite-dimensional over $k$ and have an identity. Given an algebra $C$, we denote by $C^e=C\otimes_k C^{op}$ its enveloping algebra. 
If $Q$ is a quiver, we denote by $kQ$ its path algebra. For a point  $i$ of $Q$, let $e_i$ be the primitive idempotent of $kQ$ corresponding to the stationary path at $i$. We refer the reader to \cite{ASS,S} for general notions and results of representation theory.

\subsection{Hochschild cohomology}
Let $C$ be an algebra and $E$ a $C$-$C$-bimodule which is finite-dimensional over $k$.  The {\em Hochschild complex} is the complex 

{\small \[ 0\rightarrow E \xrightarrow{b^1} \Hom_k(C,E)  \xrightarrow{b^2} \cdots  \rightarrow \Hom_k(C^{\otimes i},E)  \xrightarrow{b^{i+1}} \Hom_k(C^{\otimes {(i+1)}},E) \rightarrow \cdots \]}

\noindent where, for each $i >0, \ C^{\otimes i}$ denotes the $i$-fold tensor product of $C$ with itself over $k$. The map  $ b^1\colon E \rightarrow \Hom_k(C,E)$ is defined by $(b^1x)(c)=cx-xc$ for $x \in E, \ c \in C, $ and $b^{i+1}$ is defined by 
{\small\begin{align*}
  (b^{i+1}f)(c_0 \otimes \cdots \otimes c_{i})&= c_0f(c_1 \otimes \cdots \otimes c_{i}) + \sum_{j=1}^i (-1)^j f(c_0 \otimes \cdots \otimes c_{j-1} c_{j}\otimes \cdots  \otimes c_{i}) \\
&+  (-1)^{i+1}f(c_0 \otimes \cdots \otimes c_{i-1})c_{i}
\end{align*}}

\noindent for a $k$-linear map $f: C^{\otimes i} \rightarrow E$ and elements $c_0, \cdots, c_{i} $ in $C$.

The $i$-{th} cohomology group of this complex is called the \emph{$i$-{th} Hochschild cohomology group} of $C$ with coefficients in $E$, and is denoted by $\hh^i(C,E)$. If $_CE_C={}_CC_C$, then we write $\HH^i(C)= \hh^i(C,C)$.

The first Hochschild cohomology group has the following concrete description.   Let $\Der(C,E)$ be the vector space of all {\em derivations}, that is, $k$-linear maps $d: C\rightarrow E$ such that, for $c,c' \in C$, we have 
\[d(cc')=cd(c') +d(c)c'.\]
 A derivation $d$ is \emph{inner} if there exists $x \in E$ such that $d=[x,-]$. Letting $\Inn(C,E)$ denote the subspace of all inner derivations, we have $\hh^1(C,E) \cong \Der(C,E) / \Inn (C,E)$.

 A derivation $d: C \rightarrow E$ is called \emph{normalized} if, for any primitive orthogonal idempotent $e_i$ in a complete set $\{e_1,\ldots,e_n\}$, we have $d(e_i)=0$ for all $i$. Let $\Der_0(C,E)$ be the subspace of $\Der(C,E)$ of the normalized derivations, and $ \Inn_0(C,E)= \Der_0(C,E) \cap \Inn(C,E)$. Then we also have $\hh^1(C,E) \cong \Der_0(C,E)/ \Inn_0(C,E)$.

\subsection{The Hochschild projection maps}
Let $C$ be a finite-dimensional algebra and  $E$  a finitely generated $C$-$C$-bimodule equipped with  an associative $C$-$C$-bimodule morphism $E\otimes _C E\rightarrow E,\,e\otimes e'\mapsto ee'$. The \emph{split extension}  of $C$ by $E$ is the $k$-algebra $B$ which has the additive structure of $C\oplus E$   and whose product is defined by 
\[ (c,e)(c',e')=(cc',ce'+ec'+ee'). \] 
 If $E^2=0$, then $B$ is the \emph{trivial extension} of $C$ by $E$, which we denote by  $B=C\ltimes E.$
 
 In the special case where $C$ is a triangular algebra of global dimension at most 2, and $E$ is the relation bimodule $E=\EC$, the trivial extension $C\ltimes E$ is called  the {\em relation extension of $C$}.  If $E$
 splits as a direct sum of two $C$-$C$-bimodules $E=E'\oplus E''$, then the trivial extension $B=C\ltimes E'$ is called a \emph{partial relation extension of $C$}.

Given a split extension $B$ of $C$ by $E$, there is an exact sequence of vector spaces
\[ \xymatrix{0\ar[r]&E\ar[r]^{i} & B\ar@/^2pt/[r]^p&C\ar@/^2pt/[l]^q\ar[r]&0,} \]
where $p\colon(c,x)\mapsto c$ and $i\colon e\mapsto (0,e)$. Then $p$ is an algebra morphism which has a section $q:c\mapsto (c,0)$. 

Given a $k$-linear morphism $f\colon B^{\otimes n}\to B$,
we have a $k$-linear morphism
$pfq^{\otimes n}\colon C^{\otimes n} \to C$. It is shown in \cite[Corollary 2.2]{AGST} that the assignment $[f]\mapsto[pfq^{\otimes n}]$ defines a $k$-linear map $\varphi^n\colon\HH^n(B)\to\HH^n(C)$, called the $n$-th {\em Hochschild projection morphism.}

\section{Main result}\label{sect 2}
This section is devoted to the proof of Theorem \ref{thm main}. \subsection{}
We start with a criterion for the surjectivity of the first Hochschild projection morphism.

\begin{lemma}
 \label{lem 2.3}
 Let $B$ be a trivial extension of $C$ by $E$. The Hochschild projection morphism $\varphi^1\colon\HH^1(B)\to\HH^1(C)$ is surjective if and only if, for each derivation $d$ of $C$, there exists a $k$-linear map $\za\colon E\to E$ such that
 \[
\begin{array}
 {rclcr} x\,d(c)&=& \za(x)\, c-\za(xc), &&\textup{(C1)}\\
d(c)\,x&=& c\,\za(x)-\za(cx),  &\qquad&\textup{(C2)}
\end{array}
 \]
 for $x\in E,\ c\in C$.
\end{lemma}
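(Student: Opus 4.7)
The approach is to analyze what it means for a $k$-linear map $D\colon B\to B$ to be a derivation, by decomposing $D$ according to the vector space splitting $B = C\oplus E$. Any such $D$ is determined by four components
\[ D_{CC}\colon C\to C,\quad D_{CE}\colon E\to C,\quad D_{EC}\colon C\to E,\quad D_{EE}\colon E\to E. \]
My first step is to expand the Leibniz identity $D(bb') = D(b)b' + bD(b')$ using the trivial-extension product $(c,x)(c',x')=(cc',\,cx'+xc')$ (recalling $E^2=0$), and to match $C$- and $E$-components on both sides. This should yield that $D_{CC}\in\Der(C,C)$, that $D_{EC}\in\Der(C,E)$, that $D_{CE}$ is a $C$-bimodule map lying in $\cale(E,C)$, and that $D_{EE}$ is a $k$-linear map satisfying
\begin{align*}
D_{EE}(cx) &= D_{CC}(c)\,x + c\,D_{EE}(x),\\
D_{EE}(xc) &= D_{EE}(x)\,c + x\,D_{CC}(c).
\end{align*}

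For the ``if'' direction, given a derivation $d$ of $C$ and a map $\za\colon E\to E$ satisfying (C1) and (C2), I would set $D_{CC}:=d$, $D_{EE}:=-\za$, $D_{CE}:=0$, and $D_{EC}:=0$, and assemble these into a $k$-linear map $D\colon B\to B$. The zero off-diagonal components trivially satisfy their constraints, and the equations (C1), (C2) rearrange, with $\za=-D_{EE}$, to give exactly the two displayed $D_{EE}$-relations above. Thus $D$ is a derivation of $B$, and by construction $\varphi^1([D]) = [pDq] = [D_{CC}] = [d]$, showing that every class in $\HH^1(C)$ lies in the image of $\varphi^1$.

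For the ``only if'' direction, assume $\varphi^1$ is surjective and take an arbitrary $d\in\Der(C,C)$. Surjectivity provides a derivation $D$ of $B$ with $[D_{CC}]=[d]$ in $\HH^1(C)$, so $D_{CC}-d=[c_0,-]$ for some $c_0\in C$. Replacing $D$ by the cohomologous derivation $D-[(c_0,0),-]$ leaves $[D]$ unchanged but adjusts the $(C,C)$-component to equal $d$ on the nose, since the inner derivation $[(c_0,0),-]$ of $B$ has $(C,C)$-component $[c_0,-]$. Reading off $\za:=-D_{EE}$ then yields a $k$-linear map $E\to E$ for which the two displayed $D_{EE}$-relations become precisely (C1) and (C2).

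The main obstacle I expect is purely the careful bookkeeping in the first step; tracking which products land in $C$ versus $E$ and separating the Leibniz identity into its four component equations is routine but error-prone. Once that analysis is in place, both implications are assembled directly, with the key conceptual observation being that the freedom to choose $D_{CE}=0$ and $D_{EC}=0$ reduces the lifting problem entirely to the existence of a suitable $D_{EE}$, which is exactly what (C1) and (C2) encode.
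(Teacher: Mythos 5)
Your argument is correct. The component analysis is right: writing a derivation $D$ of $B=C\ltimes E$ in block form with respect to $B=C\oplus E$ and expanding the Leibniz rule for the product $(c,x)(c',x')=(cc',cx'+xc')$ does force $D_{CC}\in\Der(C,C)$, $D_{EC}\in\Der(C,E)$, $D_{CE}\in\cale(E,C)$ (the bimodule property from the $C$-component, the condition $D_{CE}(x)x'+xD_{CE}(x')=0$ from the $x,x'$-terms of the $E$-component), and the two stated identities for $D_{EE}$, which with $\za=-D_{EE}$ are exactly (C1) and (C2). The lifting with $D_{CE}=D_{EC}=0$ and the normalization by subtracting the inner derivation $[(c_0,0),-]$ (whose diagonal components are $[c_0,-]$ on $C$ and on $E$, and whose off-diagonal components vanish) are both sound. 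The paper, however, does not prove this lemma directly: it simply observes that the statement is a reformulation of Corollary 3.6(b) of Assem--Gatica--Schiffler--Taillefer for $n=1$, noting that the third condition there is vacuous in degree one. So your proof is genuinely different in presentation: you supply a self-contained, elementary verification (essentially reconstructing the degree-one case of the cited general result), which has the advantage of making the role of the conditions (C1), (C2) and of the space $\cale(E,C)$ completely transparent, at the cost of redoing bookkeeping that the reference already handles for all cohomological degrees at once.
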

\begin{proof}
 This is a reformulation of \cite[Corollary 3.6 (b)]{AGST} in case $n=1$ taking into account that the third condition in loc.cit.~is void in this case. 
\end{proof}

\subsection{}
From now on, we assume that $C$ is a triangular algebra of global dimension two.
\begin{lemma}
 \label{lem 2.4}
 Let $E=E'\oplus E''$ be a decomposition of the $C$-$C$-bimodule $E=\EC$ and $B=C\ltimes E'$ be a partial relation extension. Then 
 $\varphi^1\colon \HH^1(B)\to \HH^1(C)$ is surjective.
\end{lemma}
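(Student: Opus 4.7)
By Lemma~\ref{lem 2.3}, proving surjectivity of $\varphi^1\colon\HH^1(B)\to\HH^1(C)$ amounts to producing, for each derivation $d\colon C\to C$, a $k$-linear map $\alpha\colon E'\to E'$ satisfying conditions (C1) and (C2) of that lemma, with $E$ read as $E'$. The plan is to obtain such an $\alpha$ as the $E'$-component of the corresponding map for the \emph{full} relation extension $\Ctilde=C\ltimes E$, and then to exploit the $C$-$C$-bimodule splitting $E=E'\oplus E''$ to separate the two contributions.

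The first step is to invoke the analogous statement for $\Ctilde$. Since $\Ctilde$ is itself a trivial extension of $C$, Lemma~\ref{lem 2.3} applies to it, and the surjectivity of $\varphi^1\colon\HH^1(\Ctilde)\to\HH^1(C)$ is established in \cite{AGST} by an argument that, as the authors emphasize in the introduction, extends from the cluster-tilted setting to any triangular algebra of global dimension at most $2$. This yields a $k$-linear map $\widetilde{\alpha}\colon E\to E$ verifying (C1) and (C2) on all of $E$. Letting $\pi'\colon E\to E'$ denote the canonical projection, I would then define
\[
\alpha \;=\; \pi'\circ\widetilde{\alpha}|_{E'}\colon E'\longrightarrow E'.
\]

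To finish, I would check that conditions (C1) and (C2) transfer from $\widetilde{\alpha}$ to $\alpha$. Since $E'$ and $E''$ are $C$-$C$-subbimodules of $E$, the projection $\pi'$ is a morphism of $C$-$C$-bimodules; in particular, for $x\in E'$ and $c\in C$ the elements $x\,d(c)$, $xc$, $cx$ all lie in $E'$ and are therefore fixed by $\pi'$. Writing $\widetilde{\alpha}(y)=\alpha(y)+\beta(y)$ with $\beta(y)\in E''$ for $y\in E'$, we have $\widetilde{\alpha}(x)\,c=\alpha(x)\,c+\beta(x)\,c$ and $\widetilde{\alpha}(xc)=\alpha(xc)+\beta(xc)$, with $\beta(x)\,c\in E''$ and $\beta(xc)\in E''$. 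Applying $\pi'$ to (C1) for $\widetilde{\alpha}$ kills the $E''$-contributions on the right-hand side and preserves the left-hand side, producing exactly (C1) for $\alpha$; condition (C2) is handled identically.

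The main obstacle is ensuring that the surjectivity statement for $\Ctilde$ is genuinely available in this generality, and not only in the cluster-tilted case originally considered in \cite{AGST}. If one preferred to avoid that citation, one would construct $\widetilde{\alpha}$ directly: reduce to normalized derivations via the isomorphism $\HH^1(C,E)\cong\Der_0(C,E)/\Inn_0(C,E)$ mentioned in Section~\ref{sect 1}, and then define $\widetilde{\alpha}$ on a basis of $E=\EC$ indexed by a minimal set of relations of $C$, following the explicit combinatorial recipe used in \cite{AGST}. Either way, the projection step at the end is immediate from the bimodule decomposition of $E$.
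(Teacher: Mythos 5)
Your proposal is correct and follows essentially the same route as the paper: apply Lemma~\ref{lem 2.3} to the full relation extension $\Ctilde=C\ltimes E$ (whose projection morphism is surjective by \cite{AGST}) to get $\widetilde{\alpha}\colon E\to E$, then compress to $E'$ via the canonical projection and inclusion, using that the projection is a bimodule morphism and that $x\,d(c)\in E'$ for $x\in E'$. Your explicit flagging of the dependence on the surjectivity statement for $\Ctilde$ is a point the paper leaves implicit, but the argument is the same.
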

\begin{proof}
Let $\za\colon C\to C$ be a derivation and $p',q'$ be respectively the canonical projection and inclusion between $E$ and $E'$. Applying Lemma \ref{lem 2.3} above to $\Ctilde=C\ltimes E$, there exists a $k$-linear map $\za\colon E\to E$ which satisfies conditions (C1) and (C2) of the lemma. Let $\za'=p'\,\za\, q'\colon E'\to E'$. This is a $k$-linear morphism. Then for all $x\in E', \ c\in C$, we have
\[\za'(x)\,c-\za'(xc)= (p'\,\za'\,q')(x)\,c- (p'\,\za'\,q')(xc).\]
Considering $x$ and $xc$ as elements of $E$, this expression can be written as
\[p'\za(x)\,c-p'\za(xc) = p'[\za(x)\,c-\za(xc) ],\]
because $p'$ is a morphism of $C$-$C$-bimodules. Now, because of Lemma \ref{lem 2.3}, we have $\za(x)\,c-\za(xc)=x\,d(c)$ inside $E$. Since $x\in E'$ and $d(c)\in C$, we have $x\,d(c)\in E'$. Hence $p'(x\,d(c))=x\,d(c)$. This shows that $\za'(x)\,c-\za'(xc)=x\,d(c)$ as required. The second relation (C2) is proven in the same way.
\end{proof}
\subsection{} We prove the exactness of the first sequence of our main theorem. Here and in the sequel, we keep the notation of Lemma \ref{lem 2.4}, that is, we have a direct sum decomposition $E=E'\oplus E''$ and $B=C\ltimes E'$.
\begin{lemma}
 \label{lem 2.5} There exists a short exact sequence of vector spaces
 \[
\xymatrix@R10pt{0\ar[r]& \H^0(B,E')\ar[r]&\HH^0(B)\ar[r]&\HH^0(C) \ar[r] &0.
}\]
\end{lemma}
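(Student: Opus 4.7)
The plan is to identify the three vector spaces in the sequence concretely and verify exactness by direct computation on the algebra $B = C\ltimes E'$. Since $\HH^0(A) = Z(A)$, the centre of $A$, we have $\HH^0(B) = Z(B)$ and $\HH^0(C) = Z(C)$. Because $E'$ is a $B$-bimodule through the projection $p\colon B\to C$, the space $\H^0(B,E')$ equals $\{x\in E' : cx = xc \text{ for all } c\in C\}$, and the Hochschild projection $\varphi^0$ is induced by $p$, sending $(c,e')\mapsto c$.

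First I would compute $Z(B)$ explicitly. Expanding $(c,e')(c',e'') = (c',e'')(c,e')$ and using $E'\cdot E' = 0$, the condition $(c,e')\in Z(B)$ is equivalent to three requirements: $c\in Z(C)$, $e'\in \H^0(C,E')$, and $cx = xc$ for every $x\in E'$. From this description, $\ker\varphi^0 = \{(0,e') : e'\in \H^0(C,E')\}$, and the map $x\mapsto (0,x)$ identifies this kernel with $\H^0(B,E')$. This gives both the injectivity of the left-hand map and exactness at $\HH^0(B)$.

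The main step is surjectivity of $\varphi^0$. Given $c\in Z(C)$, it suffices to show that $(c,0)\in Z(B)$, which amounts to verifying $cx = xc$ for every $x\in E'$. Here I would invoke the triangularity of $C$: writing $C = kQ/I$ with $Q$ acyclic, every central element has the form $c = \sum_i \lambda_i e_i$ with $\lambda_i = \lambda_j$ whenever $i,j$ lie in the same connected component of $Q$. Since the relation bimodule $E = \EC$, and hence its summand $E'$, decomposes along the connected components of $C$ (the cross terms $\Ext^2_C(DC_i, C_j)$ vanish for $i\neq j$), every homogeneous $x\in e_i E' e_j$ satisfies $cx = \lambda_i x = \lambda_j x = xc$. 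Thus $(c,0)\in Z(B)$ and $\varphi^0(c,0) = c$, establishing surjectivity. The main (but essentially routine) point is this verification that $E'$ respects the component decomposition of $C$, which is the only place where the hypotheses on $C$ are used.
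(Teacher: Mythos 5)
Your proposal is correct, and its skeleton matches the paper's: identify $\HH^0$ with centres, note that $\varphi^0$ is the restriction of the projection $p$ to $Z(B)$, and identify $\ker\varphi^0 = E'\cap Z(B)$ with $\H^0(B,E')$ (your observation that $\H^0(B,E') = \{x\in E' : cx=xc \text{ for all } c\in C\}$ because $E'\cdot E'=0$ is exactly the content of the paper's claim that $E'\cap Z(B)\cong \Hom_{B^e}(B,E')$). Where you diverge is the surjectivity step. The paper disposes of it in one line: since $C$ is triangular (and, implicitly, connected), $Z(C)=k$, and $\varphi^0$ is nonzero because it sends $1_B$ to $1_C$; a nonzero map onto a one-dimensional space is surjective. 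You instead lift an arbitrary central element $c=\sum_i\lambda_i e_i$ through the section $q$ and check directly that $(c,0)\in Z(B)$, using that $\lambda_i$ is constant on connected components and that $E=\Ext^2_C(DC,C)$, hence $E'$, has no cross terms between blocks. Your route costs an extra (correct) verification but buys independence from the connectedness assumption; the paper's route is shorter but silently uses $Z(C)=k$, which fails for disconnected $C$. Both arguments are sound.
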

\begin{proof}
 Because $C$ is triangular, its center $Z(C)$ is equal to $k$, and hence the bimodule $E'$ is (trivially) symmetric over $Z(C)$, that is, for every $e'\in E'$ and $z\in Z(C)$ we have $z\,e'=e'\,z$. On the other hand, $\HH^0(B)=Z(B)$, $\HH^0(C)=Z(C)$ and $\varphi^0$ is the restriction to $Z(B) $ of the projection $p\colon B\to C$. Thus $\varphi^0$ maps the identity of $B$ to the identity of $C$, hence it is a nonzero morphism. Because $Z(C)=k$, it is surjective, and its kernel is the subspace of $E'$ consisting of all elements which are central in $B$. Thus $\textup{Ker} \,\varphi^0=E'\cap Z(B)$.
 
  We claim that $E' \cap Z(B)\cong\Hom_{B^e}(B,E')$. Indeed, if $f\in \Hom_{B^e}(B,E')$ then $f(1)\in E'\cap Z(B)$ because $f$ is a morphism of $B$-$B$-bimodules. On the other hand, if $x\in E'\cap Z(B)$, then the map $f_x\colon B\to E'$ defined by $1\mapsto x$ is a morphism of $B$-$B$-bimodules, because $x$ is central. It is easily seen that these two maps are inverses to each other. This establishes the claim which implies that $\textup{Ker}\,\varphi^0\cong\Hom_{B^e}(B,E)=\hh^0(B,E')$ as desired.
\end{proof}

\subsection{} The following statement is necessary for the proof of Lemma \ref{lem 2.7}.
\begin{lemma}
 \label{lem 2.6} $\cale(E',C)=0$. 
\end{lemma}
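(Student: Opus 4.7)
The plan is to establish the stronger statement that every $C$-$C$-bimodule morphism $f\colon E'\to C$ is zero; in particular the defining condition on $\mathcal{E}$ will not actually be used. The strategy is a support argument based on the triangularity of $C$.

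First, I would use the decomposition $E'=\bigoplus_{a,b} e_a E' e_b$ induced by a complete set of primitive orthogonal idempotents of $C$. Since $f$ is a $C$-$C$-bimodule morphism, it restricts, for each pair $(a,b)$, to a map $f\colon e_a E' e_b \to e_a C e_b$. So to show $f=0$, it suffices to prove that $e_a C e_b = 0$ whenever $e_a E' e_b \neq 0$.

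Next, I would invoke the standard description of the bimodule structure on $E=\EC$ for a triangular algebra of global dimension at most $2$ (as in \cite{ABS}): the nonzero components $e_a E e_b$ are in bijection with minimal relations in the chosen presentation of $C$, specifically those going from $a$ to $b$ (the new arrow in the relation extension $\Ctilde$ runs in the direction opposite to the relation). Since $E'$ is a direct summand of $E$ as a $C$-$C$-bimodule, $e_a E' e_b \neq 0$ forces $e_a E e_b \neq 0$, and hence the existence of a path in $Q_C$ from $a$ to $b$ of length at least $2$. In particular $a\neq b$.

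Finally, I would conclude using triangularity. Since $Q_C$ has no oriented cycles, the existence of a path from $a$ to $b$ precludes any path from $b$ to $a$ in $Q_C$; as $e_a C e_b$ is the $k$-span of the images in $C$ of such paths, this yields $e_a C e_b=0$, hence $f(e_a E' e_b)=0$ for every pair $(a,b)$, and thus $f=0$. The only delicate point is the bookkeeping of directions in the correspondence between $e_a E e_b$ and minimal relations, so I would quote the result from \cite{ABS} (or the \cite{ABDLS} discussion of partial relation extensions) rather than re-derive it.
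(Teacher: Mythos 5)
Your approach is essentially a streamlined version of the paper's: both arguments reduce to the fact that a new arrow of $\Ctilde$ points in the direction opposite to the relation it comes from, so that triangularity kills the relevant component of $C$. You are also right that the condition $xf(y)+f(x)y=0$ is never really needed: the paper uses it only to check that the extension by zero $\overline{f}\colon B\to C$ of $f$ is a derivation, but the containment $f(e_aE'e_b)\subseteq e_aCe_b$ already follows from $f$ being a bimodule morphism, so the stronger statement $\Hom_{C^e}(E',C)=0$ holds, exactly as you claim.

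There is, however, a genuine gap in your middle step. The nonzero components $e_aEe_b$ are \emph{not} in bijection with the minimal relations: the relations parametrize only the \emph{top} of $E$ (equivalently, the new arrows of $\Ctilde$), whereas $E$ is merely \emph{generated} by the new arrows as a $C$-$C$-bimodule. A component $e_aEe_b$ can therefore be nonzero because some product $u\gamma v\neq 0$ lands in it, where $\gamma$ is a new arrow whose endpoints differ from $a$ and $b$ and where $u,v$ are paths of $C$; for such a pair $(a,b)$ there need be no relation between $a$ and $b$ at all. In that situation the paths at your disposal ($a$ to one endpoint of $\gamma$, the relation itself, the other endpoint of $\gamma$ to $b$, plus a hypothetical path realizing $e_aCe_b\neq 0$) do not close up into an oriented cycle, so triangularity does not force $e_aCe_b=0$, and your sufficient condition ``$e_aE'e_b\neq 0$ implies $e_aCe_b=0$'' is left unproven. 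It is also stronger than what you need: since $E'$ is generated by the new arrows and $f(u\gamma v)=u\,f(\gamma)\,v$, it suffices to show $f(\gamma)=0$ for each new arrow $\gamma$, and for the component containing $\gamma$ itself the associated relation does supply a path in the opposite direction, so triangularity applies there. Restricting the support argument to these generating components --- which is what the paper does, via \cite[Corollary 2.2.1]{ABDLS} --- closes the gap; the direction bookkeeping you deferred to the references is then exactly the statement that the new arrow and its relation run antiparallel, which is the one fact the whole proof hinges on.
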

\begin{proof}
 Let $f\in \cale(E',C)$ and define $\overline{f}\colon B\to C$ by $\overline{f}(c,x)=f(x)$, for $(c,x)\in B=C\oplus E'$. Clearly, $\overline{f}|_C=0$. We claim that $\overline{f}$ is a derivation. Let $(c,x),\ (c',x') \in B$. Then
 \[ 
\begin{array}
 {rcl}
 (c,x)\overline{f}(c',x')+\overline{f}(c,x)(c',x') &=& 
 (c,x) f(x')+f(x)(c',x')\\
 &=& (c f(x')+f(x)c', \ xf(x')+f(x)x')\\
 &=& (c f(x')+f(x)c', 0),
\end{array}
 \]
 because $f\in \cale(E',C)$.
 On the other hand, $f$ is a morphism of $C$-$C$-bimodules, hence
 \[\begin{array}
 {rcl}
 (c,x)\overline{f}(c',x')+\overline{f}(c,x)(c',x') &=& 
 (f(c x'+xc'), 0) \\ &=& \overline{f}(cc',cx'+xc') = \overline{f}((c,x)(c',x')). 
\end{array}
 \]
 This completes the proof that $\overline{f}$ is a derivation. Now let $\zg\colon x\to y$ be an arrow in the quiver of $B$ which does not belong to the quiver of $C$. Then $\zg$ is a generator of $E'$ as a $C$-$C$-bimodule. We have
 \[\overline{f}(\zg)=\overline{f}(e_x\zg e_y)=e_x\overline{f}(\zg)e_y\]
 because $e_x, e_y\in C$ imply $\overline{f}(e_x)=0$ and $\overline{f}(e_y)=0$. This shows that $\overline{f}$ maps $e_x E e_y$ to $e_x C e_y$. Moreover the existence of a new arrow $\zg\colon x\to y$ implies that there exists a path from $y$ to $ x$ inside $C$ (in fact a relation), see \cite[Corollary 2.2.1]{ABDLS}. But $C$ is triangular, hence $e_xCe_y=0$. This shows that $\overline{f}=0 $ and hence $f=0$.
\end{proof}
\subsection{} We are now able to prove the exactness of the second sequence of our main theorem.
\begin{lemma}\label{lem 2.7}
 There exists a short exact sequence of vector spaces 
 \[\xymatrix{0\ar[r]&\H^1(B,E')\ar[r]&\HH^1(B)\ar[r]^{\varphi^1}&\HH^1(C) \ar[r] &0.}\]
\end{lemma}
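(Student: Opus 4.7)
My plan is to follow the same strategy as Lemma \ref{lem 2.5}: exhibit a canonical map $\H^1(B,E')\to\HH^1(B)$ and identify its image with $\ker\varphi^1$. Surjectivity of $\varphi^1$ is already Lemma \ref{lem 2.4}, so the new content is exactness at the middle term and injectivity on the left.

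To construct the map, I would observe that $E'\subseteq B$ satisfies $(E')^2=0$, so the $B$-bimodule structure on $E'$ inherited from $B$ coincides with the one obtained by restriction along $p\colon B\to C$. Composing any derivation $\eta\colon B\to E'$ with the inclusion $i\colon E'\hookrightarrow B$ therefore yields a derivation $i\eta\colon B\to B$, and inner derivations $[y,-]$ with $y\in E'$ correspond to $[(0,y),-]$ in $B$. This gives a well-defined $k$-linear map $\H^1(B,E')\to\HH^1(B)$.

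The heart of the argument is the decomposition of an arbitrary derivation $\delta\colon B\to B$ using the vector-space splitting $B=C\oplus E'$: for $c\in C$ and $x\in E'$, write $\delta(c)=(d(c),\alpha(c))$ and $\delta(x)=(f(x),g(x))$. Expanding the derivation identity on products $cc'$, $cx$, $xc$ and $xx'$ in $B$, and using $(E')^2=0$, one finds that $d$ is a derivation of $C$, that $\alpha\colon C\to E'$ is a derivation, that $f\colon E'\to C$ is a $C$-$C$-bimodule morphism satisfying $xf(x')+f(x)x'=0$, and that $g$ satisfies compatibilities with $d$ and the bimodule structure of $E'$. The crucial input is Lemma \ref{lem 2.6}: since $f\in\cale(E',C)=0$, we conclude $f=0$, so every derivation of $B$ has the form $(d,\alpha,g)$ with $\varphi^1[\delta]=[d]$.

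The remaining verifications are then formal. If $[\delta]\in\ker\varphi^1$ then $d$ is inner in $C$, say $d=[c_0,-]$; subtracting the inner derivation $[(c_0,0),-]$ from $\delta$, we may assume $d=0$, whereupon $\delta$ takes values in $E'$ and $[\delta]$ lies in the image of $\H^1(B,E')\to\HH^1(B)$. Conversely, any $\delta=(0,\alpha,g)$ is killed by $\varphi^1$. For injectivity, if $\eta\colon B\to E'$ becomes the inner derivation $[(c_0,x_0),-]$ in $B$, then its $C$-component $[c_0,-]$ must vanish, forcing $c_0\in Z(C)=k$ by triangularity of $C$, which makes $\eta$ equal to $[x_0,-]$, already inner as a derivation $B\to E'$. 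The main obstacle I anticipate is just the bookkeeping with the four components $(d,\alpha,f,g)$; Lemma \ref{lem 2.6} is the only non-formal input.
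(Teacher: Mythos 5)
Your proposal is correct, but it takes a genuinely different route from the paper. The paper's proof of this lemma is essentially a citation: surjectivity of $\varphi^1$ comes from Lemma \ref{lem 2.4}, the bimodule $E'$ is (trivially) symmetric over $Z(C)=k$, and then \cite[Theorem 4.4]{AGST} produces an exact sequence whose kernel term is $\hh^1(B,E')\oplus\cale(E',C)$; Lemma \ref{lem 2.6} kills the second summand. You instead re-derive the content of that cited theorem by hand in this special case: decomposing a derivation $\delta$ of $B=C\oplus E'$ into components $(d,\alpha,f,g)$, checking via the multiplication rule of $C\ltimes E'$ (and $(E')^2=0$) that $f\in\cale(E',C)$, and invoking Lemma \ref{lem 2.6} to get $f=0$, after which exactness in the middle and injectivity on the left are the formal arguments you describe (your injectivity step correctly uses $Z(C)=k$, which is where triangularity enters for you, playing the role of the symmetry hypothesis in the cited theorem). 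The computations you sketch all check out: $f$ is a $C$-$C$-bimodule morphism with $xf(x')+f(x)x'=0$, the subtraction of $[(c_0,0),-]$ works, and $[x_0,-]$ is indeed inner in $\Der(B,E')$. What your approach buys is a self-contained proof independent of the external reference, at the cost of the component bookkeeping; what the paper's approach buys is brevity and the observation that the general machinery of \cite{AGST} applies verbatim to partial relation extensions. Both proofs share Lemma \ref{lem 2.6} as the one non-formal input.
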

\begin{proof}
 Because of Lemma \ref{lem 2.4}, the projection morphism $\varphi^1$ is surjective. Moreover, we have seen in the proof of Lemma \ref{lem 2.5} that $E'$ is symmetric over $Z(C)$. We apply \cite[Theorem 4.4]{AGST} taking into account that $\cale(E',C)=0$, by Lemma \ref{lem 2.6}.
\end{proof}

\begin{remark*}
 Recall that, by \cite[Proposition 4.8]{AGST}, we have 
\[\hh^1(B,E')=\hh^1(C,E')\oplus \End_{C^e}E.\]
\end{remark*}

\subsection{}
We now continue the proof of our main theorem. Recall that $E=E'\oplus E''$ and $\Ctilde=C\ltimes E$.
\begin{lemma}
 \label{lem 2.9}
 The morphism $\varphi^1\colon\HH^1(\Ctilde)\to \HH^1(B)$ is surjective.
\end{lemma}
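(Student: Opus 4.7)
The plan is to mirror the proof of Lemma~\ref{lem 2.4}, but with the relation extension now viewed as the trivial extension $\widetilde{C} = B\ltimes E''$, where $E''$ becomes a $B$-$B$-bimodule via the canonical projection $p\colon B\to C$ (this is consistent with $\widetilde{C} = C\ltimes E$ because $E'\cdot E'' = E''\cdot E' = 0$ in $\widetilde{C}$). Through this identification, Lemma~\ref{lem 2.3} reduces the surjectivity of $\varphi^1\colon\HH^1(\widetilde{C})\to\HH^1(B)$ to the following: for every derivation $d\colon B\to B$, there exists a $k$-linear map $\alpha''\colon E''\to E''$ satisfying the analogues of (C1) and (C2) for all $x\in E''$ and $b\in B$.

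I would first replace $d$ by a normalized derivation in its cohomology class and set $d^C := p\circ d\colon B\to C$, so that $d^C|_C$ is a derivation of $C$. The crucial point is that $d^C|_{E'}=0$: for a non-zero $e'\in e_sE'e_t$, normalization of $d$ gives $d(e')\in e_sBe_t$ and hence $d^C(e')\in e_sCe_t$; but by \cite[Corollary~2.2.1]{ABDLS} the existence of such an $e'$ forces a relation from $t$ to $s$ in $C$, and triangularity of $C$ then gives $e_sCe_t=0$. Next, applying Lemma~\ref{lem 2.3} to the derivation $d^C|_C$ of $C$ and to $\widetilde{C}=C\ltimes E$ (using the same surjectivity of $\varphi^1\colon\HH^1(\widetilde{C})\to\HH^1(C)$ that is invoked in the proof of Lemma~\ref{lem 2.4}), one obtains a $k$-linear map $\alpha\colon E\to E$ satisfying (C1) and (C2) for $d^C|_C$. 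Letting $p''\colon E\to E''$ and $q''\colon E''\to E$ denote the canonical projection and inclusion, I set $\alpha'' := p''\,\alpha\, q''\colon E''\to E''$.

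Finally, I would verify that $\alpha''$ satisfies (C1) and (C2) with respect to $d$, splitting the argument according to whether $b\in C$ or $b\in E'$. For $b=c\in C$, exactly the computation of Lemma~\ref{lem 2.4} applies: using that $p''$ is a $C$-$C$-bimodule morphism and that $x\,d(c) = x\,d^C(c)$ since $E''\cdot E' = 0$ in $\widetilde{C}$, one obtains $\alpha''(x)c - \alpha''(xc) = p''[\alpha(x)c - \alpha(xc)] = p''(x\,d^C(c)) = x\,d(c)$. For $b=e'\in E'$, the right-hand side vanishes because $E''\cdot E' = E'\cdot E'' = 0$ in $\widetilde{C}$, and the left-hand side vanishes because $d^C(e')=0$ by the key observation; the argument for (C2) is symmetric. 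The main obstacle is precisely the vanishing $d^C|_{E'}=0$, which rests on the interplay between the triangularity of $C$ and \cite[Corollary~2.2.1]{ABDLS}; everything else is a routine adaptation of Lemma~\ref{lem 2.4}.
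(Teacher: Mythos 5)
Your proof is correct, and its engine --- producing $\za\colon E\to E$ from Lemma \ref{lem 2.3} applied to $\Ctilde=C\ltimes E$ and compressing it to $\za''=p''\za q''$ --- is exactly the construction in the paper's own proof. But you do strictly more than the published argument, and the extra work is genuinely needed: since $\Ctilde=B\ltimes E''$, the criterion of Lemma \ref{lem 2.3} for surjectivity onto $\HH^1(B)$ asks, for \emph{every derivation $d$ of $B$}, for a map $\za''$ satisfying (C1)--(C2) for \emph{all} elements of $B$, whereas the paper's proof only carries out the verification for a derivation of $C$ and for ring elements $c\in C$. Your reduction --- normalize $d$ (harmless, since for an inner derivation $[b_0,-]$ the map $x\mapsto xb_0-b_0x$ satisfies (C1)--(C2)), show that $d(E')\subseteq E'$ so that $p\circ d$ restricts to a derivation of $C$, and dispose of the case $b\in E'$ using $E''E'=E'E''=0$ --- is precisely what completes the argument. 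One small point of hygiene: \cite[Corollary 2.2.1]{ABDLS} yields $e_xCe_y=0$ only at the endpoints $x,y$ of a \emph{new arrow} $\zg$, not for arbitrary $s,t$ with $e_sE'e_t\neq 0$; so you should first conclude $d(\zg)\in e_xBe_y=e_xCe_y\oplus e_xE'e_y=e_xE'e_y$ on the arrow generators of $E'$, and then propagate $d(E')\subseteq E'$ to all of $E'$ by the Leibniz rule, using $C\cdot E'\cdot C\subseteq E'$ and $E'E'=0$. With that adjustment your proof is complete, and it in fact closes a step that the paper's version of the argument leaves implicit.
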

\begin{proof}
 Because of \cite[Lemma 2.1.1]{ABDLS}, the morphism $\varphi^1$ is well-defined. Let $d\colon C\to C$ be a derivation and $p'',\ q''$ be respectively the canonical projection and inclusion morphisms between $E$ and $E''$. Because of Lemma \ref{lem 2.3}, there exists $\za\colon E\to E$ which satisfies conditions (C1) and (C2). Consider the $k$-linear map $\za''=p''\,\za\,q''\colon E''\to E''$. Let $x''\in E''$ and $c\in C$.
 \[
\begin{array}
 {rcl}
 \za''(x'')\,c -\za''(x'' c)&=& p''\,\za\,q''(x'')\,c- p''\,\za\,q''(x''c)\\
 &=& p''\,\za(0,x'')\,c- p''\,\za(0,x''c)\\
 &=& p''[\za(0,x'')\,c- \za(0,x''c)],
\end{array}
 \]
 because $p''$ is a morphism of bimodules. Now, in $E''$ we have $\za(0,x'')\,c -\za(0,x''c)=(0,x'') d(c)$ because $\za$ satisfies condition (C1).
 On the other hand, $p''[(0,x'') d(c)]=p''(0,x'')\, d(c)=x''\,d(c)
 $. We have thus proved that 
 \[ \za''(x'')\,c -\za''(x'' c)\ =\ x''\,d(c).\]
 Hence $\za''$ satisfies condition (C1). The proof of condition (C2) is similar.
\end{proof}

\subsection{} The next lemma is needed for the proof of  exactness of the third and the fourth sequence of our main theorem.
\begin{lemma}
 \label{lem 2.10}
 $E''Z(B)=Z(B)E''=0.$ In particular, the bimodule $E''$ is symmetric over $Z(B)$.
\end{lemma}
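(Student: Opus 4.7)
The key observation I would exploit is that $\tilde{C}=C\ltimes E$ is a trivial extension, so $E\cdot E=0$ in $\tilde{C}$; in particular $E'\cdot E''=E''\cdot E'=0$. Since $B=C\ltimes E'$ is also a trivial extension, one checks easily that $B$ embeds as a subalgebra of $\tilde{C}$ and in fact $\tilde{C}=B\ltimes E''$, so that $E''$ carries a natural $B$-$B$-bimodule structure via this embedding.

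First I would analyze the form of an arbitrary element $z=(c,e')\in Z(B)$ with $c\in C$ and $e'\in E'$. Commuting $z$ with the element $(c',0)\in B$ for every $c'\in C$ forces $cc'=c'c$ and $e'c'=c'e'$, so $c\in Z(C)$ and $e'\in E'\cap Z(B)$. Since $C$ is triangular, $Z(C)=k$, and hence $c$ is a scalar. Thus every central element of $B$ decomposes as the sum of a scalar and an element of $E'\cap Z(B)$.

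Next I would compute the two actions of such a $z$ on an arbitrary $x''\in E''$, interpreting the products inside $\tilde{C}$:
\[
z\cdot x''=c\,x''+e'\cdot x'',\qquad x''\cdot z=x''\cdot c+x''\cdot e'.
\]
The terms $e'\cdot x''$ and $x''\cdot e'$ vanish by the trivial-extension relation $E\cdot E=0$, so each product reduces to the scalar action of $c\in k$. This shows, on the one hand, that the radical part $E'\cap Z(B)$ annihilates $E''$ on both sides (giving the claim $E''Z(B)=Z(B)E''=0$ for that part of the center), and on the other hand that the scalar part of $Z(B)$ acts identically on the left and on the right. Combining the two yields $z\cdot x''=x''\cdot z$, i.e.\ symmetry of $E''$ over $Z(B)$.

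The main obstacle is conceptual rather than computational: one needs to correctly identify the $B$-bimodule structure on $E''$ (coming from the embedding $B\hookrightarrow\tilde{C}$, equivalently from the presentation $\tilde{C}=B\ltimes E''$) and then exploit both the triangularity of $C$ to control $Z(C)$ and the trivial-extension relation $E\cdot E=0$ to kill the mixed $E'$-$E''$ products. Once these two ingredients are in place, the conclusion is immediate.
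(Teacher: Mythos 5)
Your proof is correct and establishes the conclusion that actually gets used later (symmetry of $E''$ over $Z(B)$), but it takes a genuinely different route from the paper's. The paper argues combinatorially with the quiver: a central element of $B$ is a linear combination of cycles, every nonzero cycle of positive length in $B$ contains exactly one arrow not in the quiver of $C$ (none is impossible because $C$ is triangular, two or more is impossible because $E^2=0$), and every element of $E''$ involves at least one arrow of $\Ctilde$ not in $B$; hence any product involves at least two new arrows and dies in $E^2=0$. You instead argue structurally: you decompose $Z(B)=k\cdot 1_B\oplus\bigl(E'\cap Z(B)\bigr)$ using $Z(C)=k$ (triangularity), and you kill the $E'$-part against $E''$ using the relation $E\cdot E=0$ inside $\Ctilde=B\ltimes E''$. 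Both arguments rest on exactly the same two ingredients --- triangularity of $C$ and $E^2=0$ --- but yours avoids the path-algebra description entirely and would work for any split of this kind. One point in your favor: read literally, the displayed identity $E''Z(B)=Z(B)E''=0$ fails for $1_B\in Z(B)$, which acts as the identity on $E''$; the paper's proof silently restricts to cycles of positive length, i.e.\ to $Z(B)\cap\rad B$. You make that restriction explicit, observe that the scalar part acts identically on both sides, and still deduce the symmetry statement, which is the only consequence needed for the applications of \cite[Lemma 4.1 and Theorem 4.4]{AGST}.
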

\begin{proof}
 We must prove that, for each $z\in Z(B)$ and each $e\in E''$, we have $ze=ez=0$. Now $z\in Z(B)$ is a linear combination of nonzero cycles in $B$. Each one of these cycles contains exactly one  arrow in the quiver of $B$ which is not in the quiver of $C$ (for, if it contains none, then it lies in $C$ which is triangular, and if it contains more than one, then it lies in $E^2=0$.)
 On the other hand, $e\in E''$ is a linear combination of nonzero paths, each containing at least one generator of $E''$, that is, an arrow of $\Ctilde$ which is not in $B$. Therefore each path appearing in $ze$ or $ez$ contains at least two arrows in $\Ctilde$ which are not in $C$. Then $ze=ez=0$ because $E^2=0$.
\end{proof}

\subsection{Proof of the main theorem}
We have proved the existence of the first two exact sequences in lemmata \ref{lem 2.5} and \ref{lem 2.7}. The sequence 
\[\xymatrix{
0\ar[r]& \H^0(\Ctilde,E'')\ar[r]&\HH^0(\Ctilde)\ar[r]^{\varphi^0}&\HH^0(B) \ar[r] &0}\]
is exact because of Lemma \ref{lem 2.10} and \cite[Lemma 4.1]{AGST} while the exactness of the sequence
\[\xymatrix{
0\ar[r]&\H^1(\Ctilde,E'')\oplus\cale(E'',B)\ar[r]&\HH^1(\Ctilde)\ar[r]^{\varphi^1}&\HH^1(B) \ar[r] &0}
\]
follows from lemmata \ref{lem 2.9} and \ref{lem 2.10} and \cite[Theorem 4.4]{AGST}.
\qed

\begin{remark*}
(a) In contrast to the second sequence, the term $\cale(E'',B)$ in the fourth sequence does not usually vanish. We refer to Example \ref{ex 2} below.

(b) Moreover, because of \cite[Proposition 4.8]{AGST}, we have
\[\hh^1(\Ctilde,E'')\cong\hh^1(B,E'')\oplus\End_{B^e} E''.\]
\end{remark*}

\section{Corollaries and examples}\label{sect 3}
In this last section, we deduce some consequences of our main result and give a couple of examples. 
\subsection{}  In our first corollary, we give another description of the group $\HH^1(B)$. Because we shall deal at the same time with several bimodule projections and inclusions, we introduce the following notation. Let $X,Y$ be bimodules overt the same algebra, then if there exists a natural inclusion from $X$ to $Y$, it will be denoted by $q_{YX}\colon X\to Y$ and similarly, if there exists a natural projection from $Y$ to $X$, say, it will be denoted by $p_{XY}\colon Y\to X$.

We define a natural morphism $\eta\colon\hh^1(\Ctilde, E) \to \hh^1(B, E')$. Recall that $\hh^1(\Ctilde, E)=\Ext^1_{C^e}(\Ctilde,E)$ while  $\hh^1(B, E')=\Ext^1_{C^e}(B,E')$. Let the exact sequence
\[\xymatrix{ \underline{e}&0\ar[r]&E\ar[r]&X\ar[r]&\Ctilde\ar[r]&0}\] 
represent an element of  $\Ext^1_{C^e}(\Ctilde,E)$, and consider the inclusion morphism $q_{\Ctilde B}\colon B\to \Ctilde$ and the projection $p_{E'\!E}\colon E\to E'$. Then the exact sequence 
\[\xymatrix{ p_{E'\!E}\ \underline{e}\ q_{\Ctilde B}&0\ar[r]&E'\ar[r]&Y\ar[r]&B\ar[r]&0}\] 
represents an element of $\Ext^1_{C^e}(B,E'). $ We set 
$\eta\colon[\underline{e}] \mapsto [ p_{E'\!E}\ \underline{e}\ q_{\Ctilde B}].$

Let also $\psi\colon \HH^1(\Ctilde, E)\to \HH^1(\Ctilde)$ denote the kernel of the Hochschild projection morphism $\varphi^1\colon\HH^1(\Ctilde)\to\HH^1(C)$.

\begin{cor}
 \label{cor 2.12}
 \begin{itemize}
\item [\textup{(a)}] The morphism $\eta $ is surjective and 
\[\textup{Ker}\,\eta=\hh^1(\Ctilde,E'')\oplus \cale(E'',B).\]
\item [\textup{(b)}] $\HH^1(B)$ is the  amalgamated sum of the morphisms $\psi\colon \HH^1(\Ctilde, E)\to \HH^1(\Ctilde)$ and $\eta\colon \HH^1(\Ctilde, E)\to \hh^1(B,E')$.
\end{itemize}
\end{cor}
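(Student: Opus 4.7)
The plan is to deduce both parts from Theorem \ref{thm main} by identifying $\eta$ with the map induced by the composition $\pi_1\circ\psi$, where $\pi_1:=\varphi^1_{\Ctilde\to B}\colon\HH^1(\Ctilde)\to\HH^1(B)$ is the Hochschild projection, surjective by Lemma \ref{lem 2.9}. Specializing the second exact sequence of Theorem \ref{thm main} to the degenerate partition $B=\Ctilde$ (so $E'=E$ and $E''=0$) yields the short exact sequence
\[
0\to\H^1(\Ctilde,E)\xrightarrow{\psi}\HH^1(\Ctilde)\xrightarrow{\varphi^1_{\Ctilde\to C}}\HH^1(C)\to 0,
\]
so that $\psi$ is the kernel inclusion. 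By functoriality of Hochschild projections, $\varphi^1_{\Ctilde\to C}=\varphi^1_{B\to C}\circ\pi_1$; hence $\pi_1\circ\psi$ takes values in $\ker\varphi^1_{B\to C}=\iota(\H^1(B,E'))$ and factors through a unique morphism $\bar\eta\colon\H^1(\Ctilde,E)\to\H^1(B,E')$ satisfying $\iota\bar\eta=\pi_1\psi$.

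The central step is to verify $\bar\eta=\eta$. On derivations, $\psi$ is postcomposition with the inclusion $q_{\Ctilde E}\colon E\hookrightarrow\Ctilde$, and $\pi_1$ sends $f\colon\Ctilde\to\Ctilde$ to $p_{B\Ctilde}\circ f\circ q_{\Ctilde B}$ by its preliminary definition. A direct computation from the decompositions $\Ctilde=B\oplus E''$ and $E=E'\oplus E''$ shows $p_{B\Ctilde}\circ q_{\Ctilde E}=q_{BE'}\circ p_{E'E}$, so for any derivation $d\colon\Ctilde\to E$ we have $\pi_1(\psi(d))=q_{BE'}\circ(p_{E'E}\circ d\circ q_{\Ctilde B})$. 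The inner factor $p_{E'E}\circ d\circ q_{\Ctilde B}\colon B\to E'$ is a $B$-derivation (here $p_{E'E}$ is a $B$-bimodule morphism because $E^2=0$ in $\Ctilde$), and under the derivation--Yoneda dictionary it corresponds exactly to the pullback--pushout $p_{E'E}\,\underline{e}\,q_{\Ctilde B}$ used to define $\eta$. Since $\iota$ is postcomposition with $q_{BE'}$, this gives $\iota\eta=\pi_1\psi=\iota\bar\eta$, and injectivity of $\iota$ forces $\bar\eta=\eta$.

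Part (a) then follows quickly. For surjectivity of $\eta$: given $y\in\H^1(B,E')$, by Lemma \ref{lem 2.9} pick $x\in\HH^1(\Ctilde)$ with $\pi_1(x)=\iota(y)$; then $\varphi^1_{\Ctilde\to C}(x)=\varphi^1_{B\to C}(\iota y)=0$, so $x=\psi(t)$ for some $t\in\H^1(\Ctilde,E)$ with $\eta(t)=y$. For the kernel, $\ker\eta=\psi^{-1}(\ker\pi_1)$, and the fourth exact sequence of Theorem \ref{thm main} identifies $\ker\pi_1=\H^1(\Ctilde,E'')\oplus\cale(E'',B)$. Since $\varphi^1_{\Ctilde\to C}=\varphi^1_{B\to C}\circ\pi_1$ vanishes on $\ker\pi_1$, we have $\ker\pi_1\subseteq\mathrm{Im}\,\psi$, and injectivity of $\psi$ yields $\ker\eta\cong\H^1(\Ctilde,E'')\oplus\cale(E'',B)$.

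For part (b), the commuting square $\iota\eta=\pi_1\psi$ induces a natural morphism from the amalgamated sum $P=(\HH^1(\Ctilde)\oplus\H^1(B,E'))/\langle(\psi(t),-\eta(t)):t\in\H^1(\Ctilde,E)\rangle$ to $\HH^1(B)$ by $[x,y]\mapsto\pi_1(x)+\iota(y)$. Surjectivity is immediate from surjectivity of $\pi_1$. For injectivity, assume $\pi_1(x)+\iota(y)=0$; then $\pi_1(x)=\iota(-y)\in\iota(\H^1(B,E'))$, which forces $\varphi^1_{\Ctilde\to C}(x)=\varphi^1_{B\to C}(-\iota y)=0$, so $x=\psi(t)$ for a unique $t$, and the compatibility gives $\iota\eta(t)=-\iota(y)$, whence $y=-\eta(t)$ by injectivity of $\iota$. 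Thus $(x,y)=(\psi(t),-\eta(t))$ and $[x,y]=0$ in $P$, completing the identification of $\HH^1(B)$ as the amalgamated sum. The principal obstacle throughout is the naturality identification $\bar\eta=\eta$ in the second paragraph; once it is in hand, everything else is straightforward diagram chasing in the sequences of Theorem \ref{thm main}.
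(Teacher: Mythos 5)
Your proposal is correct and follows essentially the same route as the paper: both hinge on the commutative ladder comparing the exact sequence $0\to\hh^1(\Ctilde,E)\to\HH^1(\Ctilde)\to\HH^1(C)\to0$ with $0\to\hh^1(B,E')\to\HH^1(B)\to\HH^1(C)\to0$, with the key verification being the identity $p_{B\Ctilde}\,q_{\Ctilde E}=q_{BE'}\,p_{E'\!E}$, followed by the snake lemma (which you carry out by hand) and the standard pushout characterization for part (b). The only cosmetic differences are that you obtain the top row by specializing the second sequence of the main theorem to $E''=0$ rather than citing \cite[Theorem B]{AGST}, and you check the left square at the level of derivations rather than Yoneda extensions.
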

\begin{proof}
 Because $p_{C\Ctilde}=p_{CB}p_{B\Ctilde}$ and $q_{\Ctilde C}=p_{\Ctilde B}p_{BC}$, the right square of the diagram below commutes.
 \[\xymatrix{0\ar[r]&\H^1(\Ctilde,E')\ar[d]^\eta\ar[r]^\psi&\HH^1(\Ctilde)\ar[d]^{\varphi^1}\ar[r]^{\varphi^1}&\HH^1(C)\ar@{=}[d] \ar[r] &0\\
0\ar[r]&\H^1(B,E')\ar[r]^\zeta&\HH^1(B)\ar[r]^{\varphi^1}&\HH^1(C) \ar[r] &0\\
 }
 \] 
 where all $\varphi^1$ are Hochschild projection morphisms, and the two rows are exact because of \cite[Theorem B]{AGST} and our main theorem above. We  claim that the left square also commutes.
 
 Let $[\underline{e}]\in  \Ext^1_{C^e}(\Ctilde,E)=\hh^1(\Ctilde, E).$ Then, by definition we have $\eta([\underline{e}])=  [ p_{E'\!E}\ \underline{e}\ q_{\Ctilde B}]$. The morphism $\zeta$ is induced from the long exact cohomology sequence, hence 
 $\zeta\eta([\underline{e}])= \zeta( [ p_{E'\!E}\ \underline{e}\ q_{\Ctilde B}]) = [q_{BE'}\ p_{E'\!E} \ \underline{e}\ q_{\Ctilde B}]$.
 On the other side of the square, we have similarly 
 $\varphi^1\psi([\underline{e}])= [p_{B\Ctilde}\ q_{\Ctilde E} \ \underline{e}\ q_{\Ctilde B}]$. It thus suffices to show that $p_{B\Ctilde}\ q_{\Ctilde E}=q_{BE'}\ p_{E'\!E} $, and this follows from the fact that the image of $p_{B\Ctilde}\ q_{\Ctilde E}\colon E\to \Ctilde\to B$ is $E'$. This establishes our claim.
 
 Applying the snake lemma and the surjectivity of $\varphi^1\colon\HH^1(\Ctilde)\to\HH^1(B)$, see our main theorem, we get that $\eta $ is surjective and $\textup{Ker}\,\eta\cong\textup{Ker}\,\varphi^1=\hh^1(\Ctilde,E'')\oplus \cale(E'', B)$. This proves (a), while (b) follows at once from the commutative diagram with exact rows.
\end{proof}

\subsection{}
In the next corollary, we need the algebra structure of $\HH^*(C)=\oplus_{n\ge0}\HH^n(C)$.
 Let $\zeta\in\HH^s(C)$  and $\xi\in \HH^{t}(C)$ be represented by cocycles $f\in\Hom_k(C^{\otimes s},C)$ and  $g\in\Hom_k(C^{\otimes t},C)$, then the {\em cup product}  $\zeta\smile \xi$ is the cohomology class of  the map $f\times g\in \Hom_k(C^{\otimes (s+t)},C)$ defined by \[(f\times g)(c_1\otimes \cdots \otimes c_{s+t})=f(c_1\otimes\cdots \otimes c_s)g(c_{s+1}\otimes \cdots\otimes c_{s+t}).\]
  With this product, $\HH^*(C)$ becomes a graded commutative 
  and associative ring called the \emph{Hochschild cohomology algebra}. It is shown in \cite[Theorem 1]{AGST} that if $B$ is a split extension of $C$, then the Hochschild projection morphisms $\varphi^n$ induce
an algebra morphism  $\varphi^*\colon\HH^*(B)\to\HH^*(C)$.

\begin{cor}
 \label{cor 2.8}
 Let $C$ be a tilted algebra and $B=C\ltimes E'$ a partial relation extension. Then the algebra morphism $\varphi^*\colon\HH^*(B)\to\HH^*(C)$ is surjective and there exists an exact sequence
 \[\xymatrix{0\ar[r]&K
 \ar[r]&  \HH^*(B)\ar[r]^{\varphi^*}&\HH^*(C)\ar[r]&0,}\]
 where $K=\hh^0(B,E')\oplus\hh^1(B,E')\oplus(\oplus_{n\ge 2}\,\HH^n(B)) $.
\end{cor}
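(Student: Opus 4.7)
The plan is to reduce everything to the main theorem plus the classical vanishing result for the Hochschild cohomology of tilted algebras. Because $C$ is tilted, a theorem of Happel gives $\HH^n(C)=0$ for every $n\ge 2$ (tilted algebras have the same Hochschild cohomology as the hereditary algebra from which they are tilted, and hereditary algebras have $\HH^n=0$ for $n\ge 2$). Combined with Theorem~A, this will pin down $\ker\varphi^n$ in every degree.

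\medskip

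First I would write $\varphi^*=\bigoplus_{n\ge 0}\varphi^n$ and treat each degree separately. In degree $0$ and $1$, the first two short exact sequences of Theorem~A give that $\varphi^0$ and $\varphi^1$ are surjective with kernels exactly $\hh^0(B,E')$ and $\hh^1(B,E')$, respectively. In degree $n\ge 2$, by Happel's vanishing result the target $\HH^n(C)$ is zero, so $\varphi^n$ is (trivially) surjective with kernel all of $\HH^n(B)$. Taking the direct sum of these short exact sequences (which remains exact, since direct sums are exact in the category of $k$-vector spaces) yields the short exact sequence
\[
0\longrightarrow K\longrightarrow \HH^*(B)\xrightarrow{\ \varphi^*\ }\HH^*(C)\longrightarrow 0,
\]
with $K=\hh^0(B,E')\oplus\hh^1(B,E')\oplus\bigl(\bigoplus_{n\ge 2}\HH^n(B)\bigr)$, which is the asserted description of the kernel.

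\medskip

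Finally, that $\varphi^*$ is an algebra morphism, not merely a graded linear map, is exactly the content of \cite[Theorem 1]{AGST} recalled just before the statement; hence its surjectivity as a linear map is automatically surjectivity as an algebra morphism, and $K$ is a (graded) two-sided ideal of $\HH^*(B)$.

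\medskip

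The only non-routine input is the vanishing $\HH^n(C)=0$ for $n\ge 2$, which is where the tilted hypothesis on $C$ enters in an essential way; everything else is a direct bookkeeping of the graded pieces provided by Theorem~A. I do not foresee a real obstacle beyond citing Happel's result cleanly.
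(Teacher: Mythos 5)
Your argument is correct and matches the paper's proof: both rely on Happel's vanishing $\HH^n(C)=0$ for $n\ge 2$ for tilted algebras together with the degree $0$ and $1$ short exact sequences of the main theorem (Lemmata \ref{lem 2.5} and \ref{lem 2.7}), assembling the kernel degree by degree. Your additional remark that $\varphi^*$ is an algebra morphism by \cite[Theorem 1]{AGST} is exactly the context the paper sets up before the statement, so nothing is missing.
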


\begin{proof}
 Because $C$ is tilted, we have $\HH^*(C)=0$ for all $n\ge 2$, see \cite{Happel}. We apply the exact sequences of Lemmata \ref{lem 2.5} and \ref{lem 2.7}.
\end{proof}

 \subsection{} We recall that in \cite[Remark 2.1.2]{ABDLS}
was defined a poset $\p$ of partial relation extensions. Let $C$ be a triangular algebra of global dimension two, and $\Ctilde=C\ltimes E$ its relation extension. Then a partial relation extension $B_1=C\ltimes E_1$ is said to be smaller than $B_2=C\ltimes E_2$ if $E_1 $ is a direct summand of $E_2$. This defines a partial order on the set of all partial relation extensions of $C$, and we denote this poset by $\p$. Note that this poset has a unique minimal element $C$ and a unique maximal element $\Ctilde$.

We give another realization of the poset $\p$. Assume $B_1\le B_2$ in $\p$, where $B_1=C\ltimes E_1$ and $B_2=C\ltimes E_2$. There exists a $C$-$C$-bimodule $E_1'$ such that $E_2=E_1\oplus E_1'$. Using the same proof as in \cite[Lemma 2.1.1]{ABDLS}, we get that $B_2=B_1\ltimes E_1'$. This implies the existence of a Hochschild projection morphism $\varphi^1\colon\HH^1(B_2)\to\HH^1(B_1)$. We are now able to define the poset $\h$. Its elements are the first Hochschild cohomology groups $\HH^1(B)$ with $B$ a partial relation extension of $C$. We say that $\HH^1(B_1)$ is smaller than $\HH^1(B_2)$ whenever there exists a Hochschild projection morphism $\varphi^1\colon\HH^1(B_2)\to\HH^1(B_1)$.

\begin{cor}
 \label{cor poset}
 \begin{itemize}
\item [\textup{(a)}] The posets $\p$ and $\h$ are isomorphic.
\item [\textup{(b)}] The map $\textup{dim}\,\HH^1(-)\colon\p\to \mathbb{N}$ is a morphism of posets.
\end{itemize}
\end{cor}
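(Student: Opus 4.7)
The plan is to take $\Phi\colon \p \to \h$, $B \mapsto \HH^1(B)$, as the candidate poset isomorphism in (a), and to derive (b) from a dimension formula for $\HH^1(B)$ extracted from Lemma \ref{lem 2.7} together with the remark immediately following it. By the very definition of $\h$, the map $\Phi$ is surjective; its injectivity will be a formal consequence of the antisymmetry of the two posets once the order-preserving property is established in both directions.

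For the forward implication in (a), if $B_1 \le B_2$ in $\p$ then $E_2 = E_1 \oplus F$ as $C$-$C$-bimodules, and as noted already in the excerpt, $B_2 = B_1 \ltimes F$. This split extension supplies, via the construction recalled in Section \ref{sect 1}, a Hochschild projection morphism $\varphi^1\colon \HH^1(B_2)\to \HH^1(B_1)$, so $\HH^1(B_1) \le \HH^1(B_2)$ in $\h$. Conversely, if $\HH^1(B_1)\le \HH^1(B_2)$ in $\h$ then, by definition, a Hochschild projection morphism exists, which presupposes a split extension $B_2 = B_1 \ltimes F$ for some $B_1$-$B_1$-bimodule $F$. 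Combining the two $C$-$C$-bimodule decompositions $B_2 = C\oplus E_1 \oplus F$ and $B_2 = C\oplus E_2$, I would conclude $E_2 = E_1\oplus F$ as $C$-$C$-bimodules: the $C$-actions on $E_1$ and $F$ are both restrictions of multiplication in $B_2$, their intersection with $C$ is zero, and their $k$-dimensions add up to that of $E_2$. Hence $E_1$ is a direct summand of $E_2$, and so $B_1\le B_2$ in $\p$.

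For (b), Lemma \ref{lem 2.7} applied to each $B_i = C\ltimes E_i$ gives a short exact sequence which, combined with the remark after Lemma \ref{lem 2.7}, yields
\[
\dim \HH^1(B_i) \;=\; \dim\HH^1(C) + \dim\hh^1(C, E_i) + \dim\End_{C^e} E,
\]
where $E = \EC$ is the full relation bimodule of $C$, independent of $i$. For $B_1 \le B_2$, writing $E_2 = E_1\oplus F$ and using the additivity of $\hh^1(C,-)$ in its second argument, we obtain $\dim\hh^1(C,E_1)\le \dim\hh^1(C,E_2)$, whence $\dim\HH^1(B_1)\le \dim\HH^1(B_2)$.

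The step I expect to be the main obstacle is the backward direction of (a): one must verify that the split extension structure $B_2 = B_1\ltimes F$ underlying a Hochschild projection morphism yields a decomposition $E_2 = E_1\oplus F$ fitting the \emph{direct summand of the full relation bimodule} picture used to define $\p$. The argument hinges on all the relevant $C$-actions being restrictions of multiplication in $B_2$, so the two decompositions should align; nevertheless, this compatibility is the one genuinely delicate point to pin down in the proof.
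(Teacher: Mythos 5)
Your part (a) is essentially what the paper does: the authors simply declare it ``clear from the respective definitions,'' since the order on $\h$ is by construction witnessed exactly by the Hochschild projection morphisms that the order on $\p$ produces; your extra care about the backward direction is reasonable but amounts to unwinding those definitions. For part (b), however, you take a genuinely different route. The paper computes no dimensions at all: it forms the commutative triangle of Hochschild projection morphisms $\HH^1(\Ctilde)\to\HH^1(B_2)\to\HH^1(B_1)$, invokes the surjectivity statements of the main theorem for the two maps out of $\HH^1(\Ctilde)$, and concludes that $\HH^1(B_2)\to\HH^1(B_1)$ is itself surjective, which gives the dimension inequality (and a little more, namely the surjectivity itself). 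Your route via Lemma \ref{lem 2.7} and additivity of $\hh^1(C,-)$ in the coefficient bimodule also works, but it contains one error worth flagging: the third term in your dimension formula is not $\dim\End_{C^e}E$ with $E=\EC$ ``independent of $i$.'' By \cite[Proposition 4.8]{AGST} the correct term is $\dim\End_{C^e}E_i$ (the remark after Lemma \ref{lem 2.7} has a typo, writing $E$ for $E'$; the computation in Example \ref{ex 2}, where $\End_{C^e}E'=k$ while $\End_{C^e}E$ would be at least $k^2$, confirms this). This does not sink your argument, since $E_1$ being a direct summand of $E_2$ makes $\End_{C^e}E_1$ a direct summand of $\End_{C^e}E_2$ as well, so this term is also monotonic; but as written, your justification that only the $\hh^1(C,E_i)$ term varies is false, and the paper's surjectivity argument sidesteps the issue entirely.
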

 
\begin{proof}
 Statement (a) is clear from the respective definitions of our posets. In order to prove (b), we assume that 
 $B_1=C\ltimes E_1$ and $B_2=C\ltimes E_2$ are partial relation extensions, with $B_1$ smaller than $B_2$. We must prove that $\textup{dim}\,\HH^1(B_1)\le \textup{dim}\,\HH^1(B_2)$. Consider the diagram of Hochschild projection morphisms.
 \[\xymatrix{\HH^1(\Ctilde)\ar[rr]^{\varphi^1} \ar[rd]_{\varphi^1} && \HH^1(B_2)\ar[ld]^{\varphi^1}\\ 
&\HH^1(B_1) }
 \]
 Because the maps $\varphi^1$ are induced by the inclusions and projections, this diagram is commutative. It follows from our main theorem that the morphisms $\HH^1(\Ctilde)\to\HH^1(B_2)$ and $\HH^1(\Ctilde)\to\HH^1(B_1)$ are surjective. Therefore the morphism $\HH^1(B_2)\to\HH^1(B_1)$ is surjective and $\textup{dim}\,\HH^1(B_1)\le\textup{dim}\,\HH^1(B_2)$ as required.
 \end{proof}

\subsection{} We end the paper with a couple of examples. In both examples, $C$ is a tilted algebra, so that $\Ctilde $ is cluster-tilted.
\begin{example}
 \label{ex 1}
 Let $C$ be given by the quiver 
 \[\xymatrix@R10pt@C40pt{1&&2\ar[ld]^\za\\ &3\ar[lu]^\zb\ar[ld]_\zd\\4&&5\ar[lu]_\zg}\] bound by the relations $\za\zb=0$ and $\zg\zd=0$. Then $\HH^1(C)=0$. 
 Let $B$ be the partial relation extension of $C$ given by the quiver
 \[\xymatrix@R10pt@C40pt{1\ar[rr]^\ze&&2\ar[ld]^\za\\ &3\ar[lu]^\zb\ar[ld]_\zd\\4&&5\ar[lu]_\zg}\] 
 bound by the relations  $\za\zb=\zb\ze=\ze\za=0$ and  $\zg\zd=0$. Then $\HH^1(B)=k$.
 Finally, the relation extension $\Ctilde$ is given by the quiver
 \[\xymatrix@R10pt@C40pt{1\ar[rr]^\ze&&2\ar[ld]^\za\\ &3\ar[lu]^\zb\ar[ld]_\zd\\4\ar[rr]_{\ze'}&&5\ar[lu]_\zg}\] 
 bound by the relations  $\za\zb=\zb\ze=\ze\za=0$ and  $\zg\zd=\zd\ze'=\ze'\zg=0$.  Clearly, $\HH^1(\Ctilde)= k^2$. \end{example}

\begin{example}
 \label{ex 2} Let $C$ be given by the quiver
 \[\xymatrix@R10pt@C40pt{&2\ar[ld]_\zb\\
 1&&4\ar[lu]_\za\ar[ld]^\zg\\&3\ar[lu]^\zd}\]
 bound by $\za\zb=0$ and $\zg\zd=0$.
 Let $B$ be the partial relation extension given by 
 \[\xymatrix@R10pt@C40pt{&2\ar[ld]_\zb\\
 1\ar[rr]^\ze&&4\ar[lu]_\za\ar[ld]^\zg\\&3\ar[lu]^\zd}\]
 bound by $\za\zb=0$ and $\zg\zd=\zd\ze=\ze\zg=0$. 
 The relation extension $\Ctilde $ is given by
 \[\xymatrix@R10pt@C40pt{&2\ar[ld]_\zb\\
 1\ar@<1.5pt>[rr]^{\ze'}\ar@<-1.5pt>[rr]_{\ze}
  &&4\ar[lu]_\za\ar[ld]^\zg\\&3\ar[lu]^\zd}\]
 bound by $\za\zb=\zb\ze'=\ze'\za=0$ and $\zg\zd=\zd\ze=\ze\zg=0$.
 Then $\HH^1(C)=k$ and $\HH^1(\Ctilde)=k^3$. In order to compute $\HH^1(B)$, observe first that ${}_CE'_{C}$ has a simple top (the arrow $\ze$) hence is indecomposable and so $\End_{C^e} E'=k$. We claim that $\hh^1(C,E')=0$. It suffices to prove that $\Der_0(C,E')=0$. Let $\xi$ be an arrow from $x$ to $y$ in $C$, and $d\colon C\to E'$ a derivation. Then $d(\xi)=d(e_x\xi e_y)=e_x d(\xi) e_y$, thus corresponds to a path from $x$ to $y$ in $B$ passing through $\ze$ and parallel to the arrow $\xi\in\{\za,\zb,\zg,\zd\}$. There is no such nonzero path. Therefore  $\hh^1(C,E')=0$ and so $\hh^1(B,E')=k$. It follows that $\HH^1(B)=k^2.$
 
 In this example we have $\cale(E'',B)\ne 0$. Indeed $E=E'\oplus E''=<\ze>\oplus <\ze'>$. Let $f\colon E''\to B$ be the morphism defined by $f(\ze')=\ze$. Then, for each $x\in E''$ we have $x\ze=0=\ze x$ because $x\ze, \ze x\in E^2=0.$ In particular $xf(\ze')+f(x)\ze'=0$ and thus $f$ is a nonzero element of $\cale(E'',B)$.
 \end{example}

\
\end{document}